\documentclass[preprint,12pt]{elsarticle}

\usepackage{amsmath}
\usepackage{amsfonts}
\usepackage{amssymb}
\usepackage{amsthm}
\usepackage{booktabs}
\usepackage{ragged2e}
\usepackage[utf8]{inputenc}
\usepackage[english]{babel}
\usepackage{graphicx}
 \usepackage{parskip}
\usepackage{textpos}
\usepackage[utf8]{inputenc}
\usepackage[T1]{fontenc}
\usepackage{helvet}
\usepackage{amsfonts}
\usepackage{amsmath}
\usepackage{amssymb}
\usepackage{tikz, graphicx}
\usepackage{hyperref}
\biboptions{numbers,sort&compress}
\usepackage{bigints}
\usepackage{xcolor}

\newtheorem{theorem}{Theorem}[section]

\newtheorem{corollary}[theorem]{Corollary}

\newtheorem{lemma}[theorem]{Lemma}
\newtheorem{example}[theorem]{Example}
\newtheorem{remark}[theorem]{Remark}

\usepackage{float}
\usepackage{graphicx} 
\numberwithin{equation}{section}

\begin{document}
\begin{frontmatter}



\title{A Universal Approximation Theorem for Neural Networks with Outputs in Locally Convex Spaces} 

\author{Sachin Saini}


\begin{abstract}
In this paper, a universal approximation theorem (UAT) for shallow neural networks whose inputs belong to a topological vector space (TVS) and whose outputs take values in a Hausdorff locally convex TVS is established. The networks are constructed using scalar activation functions applied to continuous linear functionals of the input, while the output coefficients lie in the target space. It is shown that this class of neural networks is dense in the space of continuous mappings from a compact subset of the input space into the target space with respect to the topology of uniform convergence induced by the defining seminorms. The result extends existing scalar-valued approximation theorems on TVS and includes Banach and Hilbert-valued approximation as special cases. Several corollaries and examples are provided, illustrating applications to operator approximation between function spaces.
\end{abstract}

\begin{keyword}
Universal approximation\sep Vector-valued neural networks\sep Locally convex spaces\sep
TVS.
\MSC[2020] 41A30\sep 41A65\sep 46A20\sep 46A22\sep 68T05.
\end{keyword}
\end{frontmatter}

\section{Introduction}

The capability of neural networks (NNs) to approximate functions has become a fundamental topic in both modern approximation theory and machine learning. Beyond their empirical success in regression and classification tasks, NNs provide flexible parametric families capable of representing highly complex functional relationships. From a mathematical viewpoint, their expressive power is captured by various forms of the universal approximation property.

In the classical setting, one considers shallow feedforward NNs acting on finite-dimensional Euclidean spaces. Given an input vector $s \in \mathbb{R}^d$, a network with a single hidden layer produces functions of the form
\[
s \longmapsto \sum_{j=1}^{m} a_j \, \eta(\langle w_j , s \rangle - b_j),
\]
where $w_j \in \mathbb{R}^d$, $a_j, b_j \in \mathbb{R}$, and $\eta : \mathbb{R} \to \mathbb{R}$ is a fixed activation function. Fundamental results due to Cybenko, Funahashi, Hornik, Pinkus, Leshno and others show that, under mild non-degeneracy conditions on $\eta$, such networks are dense in $C(E)$ for every compact set $E \subset \mathbb{R}^d$ \cite{Cybenko1989,Funahashi1989,Hornik1991,Pinkus1999,leshno1993multilayer}. Consequently, any continuous real-valued function on $E$ can be approximated uniformly by shallow NNs.

A key observation underlying these results is that the mapping $s \mapsto \langle w , s \rangle$ is simply a continuous linear functional on $\mathbb{R}^d$. This perspective naturally suggests replacing finite-dimensional inner products by continuous linear functionals on more general spaces. Such an approach allows NNs architectures to be defined on abstract TVS, thereby extending approximation theory beyond the Euclidean framework.

Motivated by this idea, several authors have investigated NNs acting on infinite-dimensional spaces. In Banach space settings, ridge function techniques have been used to establish density results for scalar-valued approximation \cite{sun1992fundamentality,light1992ridge}. NNs have also been employed to approximate nonlinear operators between spaces of functions \cite{chen1995universal}. More recently, operator-learning architectures such as DeepONet \cite{lu2021learning} have stimulated renewed interest in approximation results for mappings between infinite-dimensional spaces. Quantitative analyses in Hilbert and Banach space settings were subsequently developed in \cite{lanthaler2022error,korolev2022two}. Related developments include UAT for vector and hypercomplex-valued NNs over finite-dimensional algebras \cite{valle2024universal}.

A particularly flexible framework was introduced by Ismailov \cite{ismailov2026universal}, who considered feedforward NNs defined on a TVS $S$ with a sufficiently rich continuous dual $S^*$. In that setting, each hidden neuron evaluates a functional $\ell \in S^*$ at the input $s \in S$, followed by a scalar activation $\eta(\ell(s)-\theta)$. Under suitable assumptions on $S$, such as the Hahn-Banach extension property (HBEP), and on the activation function $\eta$, these scalar-valued networks are dense in $C(E)$ for compact subsets $E \subset S$. This result unifies finite and infinite-dimensional input theories within a common functional-analytic framework. However, the approximation result in \cite{ismailov2026universal} is restricted to scalar-valued mappings, that is, functions $F: S \to \mathbb{R}$. The extension of the UAT to NNs with outputs in general TVS, therefore, remains an important open direction.

However, many applications in contemporary analysis and scientific computing require the approximation of mappings whose values lie in infinite-dimensional function spaces. Typical examples include solution operators of differential equations, parameter-to-state maps, function-to-function regression problems, and models whose outputs are functions or distributions. In such situations, one seeks to approximate mappings of the form
\[
\mathcal{T} : S \longrightarrow T,
\]
where $T$ is not merely $\mathbb{R}$ but a Hausdorff LC-TVS. Convergence in such spaces is determined by families of seminorms rather than a single norm, which introduces additional analytical challenges.

The objective of the present work is to develop a UAT for NNs whose inputs belong to a TVS and whose outputs lie in an LC-TVS. The architecture we consider retains a scalar activation function $\eta : \mathbb{R} \to \mathbb{R}$, while allowing the output coefficients to take values in $T$. The resulting class of functions has the form
\[
s \longmapsto \sum_{j=1}^{m} \eta\big(\ell_j(s)-\theta_j\big) \, v_j,
\qquad \ell_j \in S^*, \; v_j \in T.
\]
Such representations may be interpreted as finite-rank nonlinear operator approximations illustrated in Figure~\ref{fig:nn_architecture}. The NNs consists of
a single hidden layer activated by the function $\eta$.

\begin{figure}[htbp]
\centering
\includegraphics[width=0.8\textwidth]{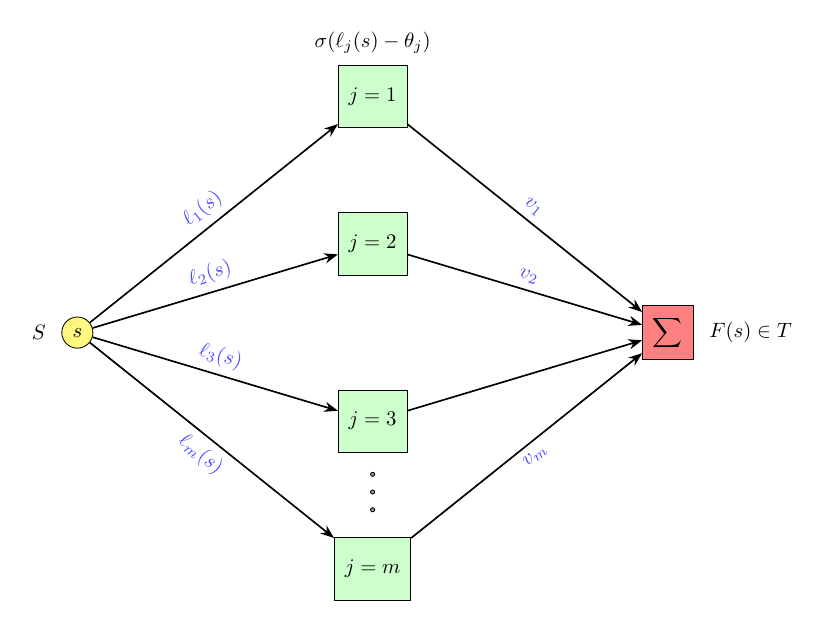}
\caption{Architecture of the above-defined NNs.}
\label{fig:nn_architecture}
\end{figure}

Our main result shows that, under suitable assumptions on $S$, $T$, and $\eta$, this class is dense in $C(E;T)$ for every compact subset $E \subset S$, where density is understood with respect to uniform convergence induced by the defining seminorms of $T$. In particular, when $T=\mathbb{R}$ we recover the scalar-valued theorem of \cite{ismailov2026universal}, while if $T$ is a Banach space the seminorm topology reduces to the standard uniform norm topology. Hence, the Banach-valued case appears as a special instance of a more general LC-approximation theory.

The results obtained here provide a functional-analytic foundation for shallow NN models operating between infinite-dimensional spaces and extend UAT to a broad class of LC-spaces.

\section{Main Results}

In this section, we formulate and prove a UAT for NNs whose inputs lie in a TVS and whose outputs take values in an LC-TVS. The result provides a vector-valued extension of the scalar approximation framework previously developed for NNs defined on TVS.

Throughout this section, let $S$ be a real TVS and let $T$ be a Hausdorff LC-TVS. 
For a compact set $E \subset S$, we denote by $C(E;T)$ the space of continuous mappings from $E$ into $T$, endowed with the topology of uniform convergence induced by the family of continuous seminorms $\rho$ of $T$.

We consider nonlinear approximants of the form
\[
s \longmapsto \sum_{j=1}^{m} 
\eta\big(\ell_j(s) - \theta_j\big)\, v_j,
\qquad 
\ell_j \in S^*, \; \theta_j \in \mathbb{R}, \; v_j \in T,
\]
where $S^*$ denotes the continuous dual of $S$ and 
$\eta : \mathbb{R} \to \mathbb{R}$ is a fixed activation function.

\begin{theorem}[Vector-valued UAT]
\label{thm:vector_uat}
Assume that $S$ possesses the HBEP and 
let $E \subset S$ be compact. 
Let $\eta : \mathbb{R} \to \mathbb{R}$ be continuous and not a polynomial 
on any nonempty open interval.

Define
\[
\mathcal{A}_{S,T}^{\eta}
:=
\mathrm{span}
\left\{
s \mapsto \eta\big(\ell(s)-\theta\big)\, v
:
\ell \in S^*,\ \theta \in \mathbb{R},\ v \in T
\right\}.
\]

Then $\mathcal{A}_{S,T}^{\eta}$ is dense in $C(E;T)$ with respect to 
uniform convergence on $E$ induced by the continuous seminorms of $T$. 
Equivalently, for every $F \in C(E;T)$, every continuous seminorm $\rho$ on $T$, and $\forall$ $\varepsilon > 0$, $\exists$ $G \in \mathcal{A}_{S,T}^{\eta}$ such that
\[
\sup_{s \in E} \rho\big(F(s) - G(s)\big) < \varepsilon .
\]
\end{theorem}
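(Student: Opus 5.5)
The plan is to reduce the vector-valued statement to the scalar-valued theorem of \cite{ismailov2026universal}. That result is available under exactly the present hypotheses ($S$ with the HBEP, $E$ compact, $\eta$ continuous and not a polynomial on any interval): the scalar span $\mathcal{A}_S^{\eta}:=\mathrm{span}\{s\mapsto \eta(\ell(s)-\theta)\}$ is dense in $C(E)$. The bridge between the two settings is a finite-rank approximation of $F$ through a partition of unity on the compact set $F(E)\subset T$. Afterwards each scalar coefficient is replaced by a network.

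\textbf{Step 1 (finite-rank reduction).} Fix $F\in C(E;T)$, a continuous seminorm $\rho$ and $\varepsilon>0$. Since $F$ is continuous and $E$ is compact, $K:=F(E)$ is compact in $T$. Cover $K$ by finitely many $\rho$-balls $U_i=\{t:\rho(t-t_i)<\varepsilon/3\}$, $i=1,\dots,N$, with centres $t_i\in K$. Each $U_i$ is open in $T$, since $\rho$ is continuous. Pull the cover back to the open sets $W_i:=F^{-1}(U_i)$, which cover $E$. As $E$ is a compact Hausdorff space (Hausdorffness of $S$ is implicit in the HBEP setting; alternatively one can work with the continuous pseudometric $d(s,s')=\rho(F(s)-F(s'))$, which avoids any separation issue), there is a continuous partition of unity $\{\phi_i\}$ on $E$ subordinate to $\{W_i\}$. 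For instance, set $\phi_i=\psi_i/\sum_k\psi_k$ with $\psi_i(s)=\max\{0,\varepsilon/3-\rho(F(s)-t_i)\}$; these are continuous, and the denominator is positive on $E$. Define $H(s):=\sum_i\phi_i(s)\,t_i$. Whenever $\phi_i(s)>0$ we have $\rho(F(s)-t_i)<\varepsilon/3$, so convexity of $\rho$ gives $\rho(F(s)-H(s))\le\sum_i\phi_i(s)\rho(F(s)-t_i)<\varepsilon/3$ uniformly on $E$.

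\textbf{Step 2 (scalar approximation).} For each $i$ with $\rho(t_i)>0$, apply the scalar UAT to $\phi_i\in C(E)$. This yields $g_i\in\mathcal{A}_S^{\eta}$ with $\sup_E|\phi_i-g_i|<\varepsilon/(3N(\rho(t_i)+1))$; for the $i$ with $\rho(t_i)=0$ take $g_i=0$, or use the same bound, which works as well. Set $G:=\sum_i g_i\,t_i$. Each $g_i$ is a finite sum $\sum_j c_j\eta(\ell_j(s)-\theta_j)$, so $G=\sum_{i,j}\eta(\ell_j(s)-\theta_j)(c_jt_i)$ lies in $\mathcal{A}_{S,T}^{\eta}$, because $c_jt_i\in T$. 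Homogeneity and subadditivity of $\rho$ then give
\[
\sup_{s\in E}\rho(H(s)-G(s))\le\sum_{i=1}^N\sup_E|\phi_i-g_i|\,\rho(t_i)<\varepsilon/3 .
\]
Combining this with Step 1 yields $\sup_E\rho(F-G)<\varepsilon$. Since the uniform topology on $C(E;T)$ is generated by the seminorms $\sup_E\rho(\cdot)$, density follows.

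\textbf{Main obstacle.} The only delicate point is Step 1. One must build the partition of unity using the single seminorm $\rho$, which is not a norm, so that only continuity of $\rho$ and compactness of $F(E)$ are needed; local convexity enters precisely through the convexity estimate there. The explicit $\psi_i$ construction above handles this without invoking paracompactness or separation axioms. Everything else is a direct invocation of the scalar result of \cite{ismailov2026universal}. The Hausdorff hypothesis on $T$ is not actually needed for the argument. It only ensures that $C(E;T)$ with the uniform seminorm topology is Hausdorff.
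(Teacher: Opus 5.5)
Your proposal is correct and follows essentially the same route as the paper: a finite-rank reduction via a $\rho$-ball cover of $F(E)$ and a partition of unity (the paper's Lemma~\ref{lem:finite_rank_density}), followed by replacing each scalar coefficient with a network obtained from the scalar UAT of \cite{ismailov2026universal}. Your explicit partition $\psi_i=\max\{0,\varepsilon/3-\rho(F(s)-t_i)\}$ and the tolerance $\varepsilon/(3N(\rho(t_i)+1))$ are minor refinements: they avoid the paper's appeal to paracompactness of a compact Hausdorff $E$ (which tacitly requires $E$, hence $S$, to be Hausdorff) and its separate treatment of the case $C=0$.
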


\textbf{Well-definedness of the neural network representation:}
We justify that the NN expressions considered in this paper are well defined. 
Let $s \in S$, $\ell \in S^*$, $\theta \in \mathbb{R}$, and $v \in T$.

\begin{enumerate}
\item Since $\ell \in S^*$, it is a continuous linear functional on $S$. Hence
\[
\ell : S \to \mathbb{R},
\]
and therefore $\ell(s) \in \mathbb{R}$.

\item Since $\theta \in \mathbb{R}$, the quantity $\ell(s)-\theta$ also belongs to $\mathbb{R}$.

\item The activation function $\eta : \mathbb{R} \to \mathbb{R}$ is well defined, and therefore
\[
\eta(\ell(s)-\theta) \in \mathbb{R}.
\]

\item Because $T$ is a topological vector space, scalar multiplication is defined as a mapping
\[
\mathbb{R} \times T \to T, \qquad (a,v) \mapsto av .
\]
Consequently,
\[
\eta(\ell(s)-\theta)\, v \in T .
\]

\item Since $T$ is a vector space, finite sums of elements of $T$ again belong to $T$. Therefore expressions of the form
\[
\sum_{j=1}^{m} \eta(\ell_j(s)-\theta_j)\, v_j
\]
are well defined and take values in $T$.
\end{enumerate}

\begin{remark}
When $T=\mathbb{R}$, Theorem \ref{thm:vector_uat} reduces to the scalar-valued
UAT established in \cite{ismailov2026universal}.
If $T$ is a Banach space, the topology of uniform convergence induced by the
seminorms of $T$ coincides with the usual uniform norm topology.
\end{remark}

We first establish that vector-valued finite-rank mappings are dense in 
$C(E;T)$ under the seminorm topology of $T$.

\begin{lemma}\label{lem:finite_rank_density}
Let $E$ be a compact Hausdorff space and let $T$ be a Hausdorff LC-TVS. Then the collection of mappings of the form
\[
s \longmapsto \sum_{j=1}^{m} \psi_j(s)\, v_j,
\qquad 
\psi_j \in C(E), \; v_j \in T,
\]
is dense in $C(E;T)$ with respect to uniform convergence on $E$ 
induced by the continuous seminorms of $T$.
\end{lemma}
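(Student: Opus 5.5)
The plan is to approximate $F\in C(E;T)$ by a partition of unity subordinate to a finite open cover of $E$, on each piece of which $F$ is nearly constant in the seminorm $\rho$. Fix $F$, a continuous seminorm $\rho$ on $T$, and $\varepsilon>0$.

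\textbf{Step 1: a finite cover on which $F$ varies little.} Since $F$ is continuous and $\rho$ is continuous on $T$, for each $t\in E$ the set
\[
U_t:=\{s\in E:\ \rho(F(s)-F(t))<\varepsilon\}
\]
is an open neighbourhood of $t$ in $E$. Indeed, $U_t$ is the preimage under $F$ of the open set $\{v:\rho(v-F(t))<\varepsilon\}$, which is open because $\rho$ is continuous. By compactness we extract $t_1,\dots,t_m$ with $E=\bigcup_{j} U_{t_j}$.

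\textbf{Step 2: a partition of unity.} Since $E$ is compact Hausdorff, it is normal. Hence there exist $\psi_1,\dots,\psi_m\in C(E)$ with $\psi_j\ge 0$, $\operatorname{supp}\psi_j\subset U_{t_j}$, and $\sum_j\psi_j\equiv 1$ on $E$. This is the standard partition of unity subordinate to a finite open cover, obtained via Urysohn's lemma after shrinking the cover. Set
\[
G(s):=\sum_{j=1}^m \psi_j(s)\,F(t_j),
\]
which has the required form with $v_j=F(t_j)\in T$.

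\textbf{Step 3: the estimate.} For every $s\in E$ we have $F(s)=\sum_j\psi_j(s)F(s)$. Using subadditivity and absolute homogeneity of $\rho$ together with $\psi_j\ge0$, this gives
\[
\rho\big(F(s)-G(s)\big)\le \sum_{j=1}^m \psi_j(s)\,\rho\big(F(s)-F(t_j)\big).
\]
Each term with $\psi_j(s)\neq0$ has $s\in U_{t_j}$, so its factor $\rho(F(s)-F(t_j))$ is less than $\varepsilon$. Hence the right-hand side is less than $\varepsilon\sum_j\psi_j(s)=\varepsilon$. The strictness of the bound is uniform in $s$ only up to attaining the supremum, so to be safe we run the construction with $\varepsilon/2$ and obtain $\sup_{s\in E}\rho(F(s)-G(s))\le\varepsilon/2<\varepsilon$.

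The uniform topology on $C(E;T)$ has as a subbasis the sets determined by finitely many seminorms. Density therefore follows once we handle finitely many seminorms $\rho_1,\dots,\rho_k$ simultaneously. We do this by applying the argument to $\rho=\max_i\rho_i$, which is again a continuous seminorm.

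\textbf{Main obstacle.} The only genuinely nontrivial point is the existence of the partition of unity, which needs the normality of compact Hausdorff spaces. The rest is elementary, and it uses only seminorm properties and continuity. Local convexity enters precisely through the convexity estimate in Step 3, where the seminorm absorbs the convex combination.
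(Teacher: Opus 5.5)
Your proof is correct and follows essentially the same route as the paper. Both arguments take a finite open cover of $E$ on whose pieces $F$ is $\rho$-close to a fixed vector, pass to a subordinate partition of unity, and apply the convexity estimate $\rho(F(s)-G(s))\le\sum_j\psi_j(s)\,\rho(F(s)-v_j)$. The only cosmetic differences are two: you take the centers $v_j=F(t_j)$ directly from compactness of $E$ rather than from $\rho$-total boundedness of $F(E)$, and you make explicit the reduction from finitely many seminorms to one via $\max_i\rho_i$, which the paper leaves implicit.
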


\begin{proof}
Fix $F \in C(E;T)$, a continuous seminorm $\rho$ on $T$, and $\varepsilon > 0$.

Since $E$ is compact and $F$ is continuous, the image set $F(E)$ is compact in $T$. In a Hausdorff LC-space, compact sets are totally bounded with respect to every continuous seminorm. 
So, there exist elements $v_1, \dots, v_m \in T$ such that
\[
F(E) \subset 
\bigcup_{j=1}^{m}
\{ t \in T : \rho(t - v_j) < \varepsilon \}.
\]

For each $j$, define
\[
V_j := \{ s \in E : \rho(F(s) - v_j) < \varepsilon \}.
\]
Then $\{V_j\}_{j=1}^m$ forms an open cover of $E$. 
Since $E$ is a compact Hausdorff space, it is paracompact. So there exists
a partition of unity $\{\psi_j\}_{j=1}^m \subset C(E)$ subordinate to the open
cover $\{V_j\}_{j=1}^m$, that is,
\[
0 \le \psi_j \le 1,
\qquad
\mathrm{supp}(\psi_j) \subset V_j,
\qquad
\sum_{j=1}^{m} \psi_j(s) = 1
\quad \text{for all } s \in E.
\]

Define
\[
G(s) := \sum_{j=1}^{m} \psi_j(s)\, v_j.
\]
Then $G \in C(E;T)$. Moreover, for every $s \in E$,
\[
\begin{aligned}
\rho\big(F(s) - G(s)\big)
&=
\rho\!\left(
\sum_{j=1}^{m} \psi_j(s)\big(F(s) - v_j\big)
\right)
\\
&\le
\sum_{j=1}^{m} \psi_j(s)\, \rho\big(F(s) - v_j\big)
< \varepsilon,
\end{aligned}
\]
since $\psi_j(s) \neq 0$ implies $s \in V_j$. 
Taking the supremum over $E$ yields
\[
\sup_{s \in E} \rho\big(F(s) - G(s)\big) < \varepsilon .
\]
\end{proof}

\begin{remark}
Lemma~\ref{lem:finite_rank_density} shows that $C(E;T)$ is generated, 
in the seminorm topology, by finite linear combinations of scalar-valued 
continuous functions with coefficients in $T$. 
If $T$ is a Banach space, the seminorm topology reduces to the usual 
uniform norm topology.
\end{remark}

Now, we next recall the scalar-valued approximation result in the topological 
vector space setting, which will serve as the key ingredient in the proof 
of the main theorem.

\begin{lemma}\label{lem:scalar_tvs_uat}
Let $S$ be a TVS with the HBEP 
and let $E \subset S$ be compact. 
Assume that $\eta : \mathbb{R} \to \mathbb{R}$ is continuous and 
not a polynomial on any open interval. 

Define
\[
\mathcal{A}_{S}^{\eta}
:=
\mathrm{span}
\left\{
s \longmapsto \eta\big(\ell(s) - \theta\big)
:
\ell \in S^*, \; \theta \in \mathbb{R}
\right\}.
\]
Then $\mathcal{A}_{S}^{\eta}$ is dense in $C(E)$.
\end{lemma}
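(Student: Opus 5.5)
The plan is to reduce Lemma~\ref{lem:scalar_tvs_uat} to the classical finite-dimensional UAT (Leshno--Lin--Pinkus--Schocken), using the Stone--Weierstrass theorem together with the HBEP to pass from $E$ to finite-dimensional projections. The argument has four steps.

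\textbf{Step 1: a separating subalgebra.} Let $\mathcal{B}$ be the algebra of functions on $E$ of the form $s\mapsto p(\ell_1(s),\dots,\ell_k(s))$, where $k\ge 1$, $\ell_i\in S^*$ and $p$ is a real polynomial. This is a unital subalgebra of $C(E)$. I claim it separates points of $E$. Given distinct $s_1,s_2\in E$, the HBEP (in the form used by Ismailov: $S^*$ separates points of $S$, obtained by extending a nonzero functional from $\mathrm{span}\{s_1-s_2\}$ continuously to $S$) yields $\ell\in S^*$ with $\ell(s_1)\neq\ell(s_2)$. Since $E$ is a compact subset of a Hausdorff space—the point separation by $S^*$ forces $S$ to be Hausdorff—Stone--Weierstrass shows that $\mathcal{B}$ is dense in $C(E)$.

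\textbf{Step 2: approximating a single ridge monomial.} It suffices to approximate each generator of $\mathcal{B}$, that is, each product of the form $\ell_1(s)^{a_1}\cdots\ell_k(s)^{a_k}$, uniformly on $E$ by elements of $\mathcal{A}_S^\eta$. Fix finitely many $\ell_1,\dots,\ell_k$ and set $L=(\ell_1,\dots,\ell_k):S\to\mathbb{R}^k$. Then $L$ is continuous, so $K:=L(E)$ is compact in $\mathbb{R}^k$. The target function is $g\circ L$ for a polynomial $g$ on $\mathbb{R}^k$.

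\textbf{Step 3: invoking the classical UAT.} By the Leshno et al.\ theorem, which applies because $\eta$ is continuous and not a polynomial on any interval, for every $\varepsilon>0$ there exist $w_j\in\mathbb{R}^k$, $\theta_j,c_j\in\mathbb{R}$ with
\[
\sup_{x\in K}\Big|g(x)-\sum_j c_j\,\eta(\langle w_j,x\rangle-\theta_j)\Big|<\varepsilon .
\]
For $s\in E$ we have $\langle w_j,L(s)\rangle=\ell^{(j)}(s)$, where $\ell^{(j)}:=\sum_i w_{j,i}\ell_i\in S^*$. Hence $\sum_j c_j\eta(\ell^{(j)}(s)-\theta_j)$ lies in $\mathcal{A}_S^\eta$ and is $\varepsilon$-close to $g\circ L$ uniformly on $E$.

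\textbf{Step 4: combining.} Steps 2 and 3 show that $\mathcal{B}$ is contained in the closure of $\mathcal{A}_S^\eta$. By Step 1, that closure therefore equals $C(E)$.

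The main obstacle is Step 1: making precise what the HBEP delivers. If the HBEP is only the extension property without local convexity, one must check that the resulting functional is nonzero on $s_1-s_2$ and continuous on all of $S$. I would take the HBEP to mean exactly that every continuous functional on a subspace extends continuously to $S$. Applied to the one-dimensional subspace spanned by $s_1-s_2$, where any linear functional is continuous because the subspace is Hausdorff and finite-dimensional, this gives the required $\ell$. If Hausdorffness of $S$ is not assumed, I would instead work on the quotient by $\overline{\{0\}}$. Alternatively, one can simply cite \cite{ismailov2026universal}, since the lemma is exactly that paper's result.
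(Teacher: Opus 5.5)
Your argument is correct. It is a genuinely different route from the paper, whose entire proof of this lemma is a one-line citation of \cite{ismailov2026universal}.

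You give a self-contained reduction instead:
\begin{enumerate}
\item The HBEP is used only to show that $S^*$ separates points of $E$, via a nonzero functional on $\mathrm{span}\{s_1-s_2\}$.
\item Stone--Weierstrass then makes the algebra of polynomials in finitely many functionals dense in $C(E)$.
\item Each such $g\circ L$ is approximated by pulling back the Leshno--Lin--Pinkus--Schocken approximation on the compact set $L(E)\subset\mathbb{R}^k$, using that $\langle w,L(s)\rangle=\sum_i w_i\ell_i(s)$ again lies in $S^*$.
\end{enumerate}

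What your route buys is transparency about where each hypothesis enters. The HBEP matters only through point separation, so the lemma holds for any TVS whose dual separates points, e.g.\ every Hausdorff locally convex space. The activation matters only through the finite-dimensional theorem, for which continuity plus being non-polynomial on $\mathbb{R}$ already suffices; the stronger assumption ``not a polynomial on any open interval'' is not needed.

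Your route also surfaces the Hausdorff issue that the citation hides. As phrased, your Step~1 is slightly circular: you use Hausdorffness of the line to obtain a separating functional, then use separation to conclude $S$ is Hausdorff. Your closing remark about passing to $S/\overline{\{0\}}$ resolves this correctly, for two reasons:
\begin{enumerate}
\item For $x\notin\overline{\{0\}}$, the line $\mathrm{span}\{x\}$ meets $\overline{\{0\}}$ only in $0$, so it is Hausdorff and the HBEP still gives a functional with $\ell(x)\neq 0$.
\item Continuous real functions on $E$ cannot distinguish points whose difference lies in $\overline{\{0\}}$, so nothing is lost by working in the quotient.
\end{enumerate}

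The citation is shorter, but it leaves the reader to verify that the hypotheses of \cite{ismailov2026universal} match those stated here.
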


\begin{proof}
This statement follows from the scalar UAT for NNs defined on TVS; see 
\cite{ismailov2026universal}.
\end{proof}

\begin{proof}[Proof of Theorem~\ref{thm:vector_uat}]
Let $F \in C(E;T)$, let $\rho$ be a continuous seminorm on $T$, and fix $\varepsilon > 0$.

By Lemma~\ref{lem:finite_rank_density}, there exist elements $v_1, \dots, v_m \in T$ and functions $\phi_1, \dots, \phi_m \in C(E)$ such that
\[
\sup_{s \in E}
\rho\!\left(
F(s) - \sum_{j=1}^{m} \phi_j(s)\, v_j
\right)
<
\frac{\varepsilon}{2}.
\]

Define
\[
C := \max_{1 \le j \le m} \rho(v_j).
\]

If $C=0$, then $\rho(v_j)=0$ for all $j$, and therefore
\[
\rho\!\left(F(s) - \sum_{j=1}^{m} \phi_j(s)v_j\right)
= \rho(F(s))
\]
for all $s \in E$. The above estimate implies
\[
\sup_{s\in E} \rho(F(s)) < \frac{\varepsilon}{2},
\]
and taking $G \equiv 0$ completes the proof in this case.

Assume now that $C>0$. By Lemma~\ref{lem:scalar_tvs_uat}, for each 
$j=1,\dots,m$ there exists $\psi_j \in \mathcal{A}_{S}^{\eta}$ such that
\[
\sup_{s \in E} 
|\phi_j(s) - \psi_j(s)|
<
\frac{\varepsilon}{2mC}.
\]

Define
\[
G(s)
:=
\sum_{j=1}^{m} \psi_j(s)\, v_j.
\]
By construction, $G \in \mathcal{A}_{S,T}^{\eta}$.

For each $s \in E$ we estimate
\[
\begin{aligned}
\rho\big(F(s) - G(s)\big)
&\le
\rho\!\left(
F(s) - \sum_{j=1}^{m} \phi_j(s)\, v_j
\right)
+
\rho\!\left(
\sum_{j=1}^{m} (\phi_j(s)-\psi_j(s))\, v_j
\right)
\\
&\le
\frac{\varepsilon}{2}
+
\sum_{j=1}^{m}
|\phi_j(s)-\psi_j(s)|\, \rho(v_j)
\\
&<
\frac{\varepsilon}{2}
+
\sum_{j=1}^{m}
\frac{\varepsilon}{2mC}\, \rho(v_j)
\le
\varepsilon.
\end{aligned}
\]

Taking the supremum over $E$ gives
\[
\sup_{s \in E} \rho\big(F(s)-G(s)\big) < \varepsilon,
\]
which proves the desired approximation.
\end{proof}

The preceding theorem establishes the density of shallow NNs with vector-valued coefficients in $C(E;T)$ under the seminorm topology. 
When $T$ is a Banach space, the defining family of seminorms reduces to 
the norm, and the result becomes a uniform approximation theorem with 
respect to the Banach norm.

\begin{remark}[Dual formulation]
Let $T$ be a Hausdorff LC-TVS with continuous dual $T'$. Suppose a net $\{F_\alpha\} \subset C(E;T)$ converges to $F$ uniformly on $E$ with respect to the seminorm topology of $T$. 
Then for every $t' \in T'$,
\[
\sup_{s \in E}
\big|
t'(F_\alpha(s) - F(s))
\big|
\longrightarrow 0.
\]

If, in addition, the topology of $T$ is generated by its dual, the converse implication also holds.
\end{remark}

\section{Corollaries}

In this section, we derive several corollaries of the main theorem and present examples to illustrate the theoretical results.

Explicitly, a NNs $(F_\alpha)$ converges to $F$ in $C(E;T)$ if and only if
\[
\sup_{s \in E} \rho\big(F_\alpha(s) - F(s)\big) \longrightarrow 0
\quad \text{for every continuous seminorm } \rho \text{ on } T.
\]
Whenever sequence spaces or $L^p$ spaces are involved, we assume the usual 
duality relations under standard hypotheses (for example, $(L^p)^* = L^{p'}$ 
for $1 < p < \infty$).

\begin{corollary}[Hilbert-valued approximation]
Let $S$ be a TVS possessing the HBEP, and let $E \subset S$ be compact. 
Assume $T$ is a Hilbert space and $\eta : \mathbb{R} \to \mathbb{R}$ is continuous and non-polynomial on some open interval. 

Then the approximation class
\[
\mathcal{A}_{S,T}^{\eta}
=
\mathrm{span}
\left\{
s \mapsto \eta\big(\ell(s)-\theta\big)\, v
:
\ell \in S^*, \; \theta \in \mathbb{R}, \; v \in T
\right\}
\]
is dense in $C(E;T)$ with respect to the uniform norm topology on $E$. 
In this case, the seminorm topology reduces to the norm topology induced by the Hilbert space structure.
\end{corollary}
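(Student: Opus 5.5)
The plan is to obtain the corollary as a direct specialization of Theorem~\ref{thm:vector_uat}. The only extra work is to check that a Hilbert space qualifies as a target space $T$, and that the seminorm formulation of density in the theorem collapses to ordinary uniform-norm density.

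First, I will verify the hypotheses on $T$. A real Hilbert space $(T,\langle\cdot,\cdot\rangle)$ with norm $\|t\| = \langle t,t\rangle^{1/2}$ is a normed space. Its topology is therefore generated by the single seminorm $\|\cdot\|$, which makes it locally convex. Since $\|t\|=0$ forces $t=0$, it is also Hausdorff. Thus $T$ is a Hausdorff LC-TVS.

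Next, I will match the activation hypothesis. The theorem asks that $\eta$ be continuous and not a polynomial on any nonempty open interval. I read the corollary's phrase ``non-polynomial on some open interval'' as this same standing assumption and invoke it unchanged; I would flag this wording in the write-up. Together with the HBEP for $S$ and compactness of $E$, all hypotheses of Theorem~\ref{thm:vector_uat} then hold. The theorem gives: for every continuous seminorm $\rho$ on $T$ and every $\varepsilon>0$ there is $G\in\mathcal{A}_{S,T}^{\eta}$ with $\sup_{s\in E}\rho(F(s)-G(s))<\varepsilon$.

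Finally, I will show that uniform convergence with respect to all continuous seminorms coincides with convergence in $\sup_{s\in E}\|F(s)-G(s)\|$, and this is the step needing a short argument. In one direction, $\|\cdot\|$ is itself a continuous seminorm, so taking $\rho=\|\cdot\|$ in the theorem already yields $G$ with $\sup_{s\in E}\|F(s)-G(s)\|<\varepsilon$. In the other direction, any continuous seminorm $\rho$ satisfies $\rho\le c\|\cdot\|$ for some constant $c$, because continuity at $0$ gives a ball on which $\rho\le 1$. Hence norm-uniform approximation controls every $\rho$, and the two topologies on $C(E;T)$ agree. This also justifies the final sentence of the corollary.

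The main (mild) obstacle is only this identification of topologies; everything else is immediate from the theorem. Alternatively, one could bypass the theorem and rerun its proof with $\rho=\|\cdot\|$, using Lemma~\ref{lem:finite_rank_density} and Lemma~\ref{lem:scalar_tvs_uat}, but that is unnecessary.
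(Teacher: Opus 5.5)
Your proposal is correct and matches the paper's implicit derivation: specialize Theorem~\ref{thm:vector_uat} with $\rho=\|\cdot\|$, then use that for a normed target the seminorm topology on $C(E;T)$ coincides with the uniform norm topology, which your bound $\rho\le c\|\cdot\|$ makes explicit where the paper only asserts it in a remark. Your flag on the activation wording is warranted: read literally, ``non-polynomial on some open interval'' only says $\eta$ is not a polynomial (ReLU qualifies), which is weaker than the theorem's hypothesis, and the paper treats the two as identical without comment, just as you do.
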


\begin{corollary}[Function-to-function approximation]
Let $(\Omega_1,\mu_1)$ and $(\Omega_2,\mu_2)$ be measure spaces, with $1 < p < \infty$ and $1 \le q < \infty$. 
Set $S = L^p(\Omega_1)$ and $T = L^q(\Omega_2)$. Suppose $E \subset L^p(\Omega_1)$ is compact in the $L^p$ topology, and let $\eta$ satisfy the hypotheses of Theorem~\ref{thm:vector_uat}. 
Then mappings of the form
\[
f \longmapsto 
\sum_{j=1}^{m}
\eta\!\left(
\int_{\Omega_1} f(s)\,\varphi_j(s)\, d\mu_1(s)
- \theta_j
\right)
g_j,
\qquad
\varphi_j \in L^{p'}(\Omega_1), \;
g_j \in L^q(\Omega_2),
\]
are dense in $C(E; L^q(\Omega_2))$ with respect to uniform convergence on $E$.
\end{corollary}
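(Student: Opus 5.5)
The corollary will follow by specializing Theorem~\ref{thm:vector_uat} to $S = L^p(\Omega_1)$ and $T = L^q(\Omega_2)$. Two things need to be checked: that the hypotheses of the theorem hold for this pair of spaces, and that the abstract class $\mathcal{A}_{S,T}^{\eta}$ is exactly the family of integral-form mappings displayed in the statement.

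For the output space, $T = L^q(\Omega_2)$ with $1 \le q < \infty$ is a Banach space, hence a Hausdorff LC-TVS whose topology is generated by the single seminorm $\|\cdot\|_{L^q}$. Uniform convergence on $E$ with respect to the continuous seminorms of $T$ is therefore just uniform convergence in the $L^q$ norm. Functions are identified up to null sets as usual, so the space is Hausdorff.

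For the input space, $S = L^p(\Omega_1)$ is a normed space. The Hahn--Banach theorem for normed spaces gives the HBEP: every continuous linear functional on a subspace extends to a continuous linear functional on the whole space. This is the form of the property used in \cite{ismailov2026universal}, and it does not depend on $p$ beyond $S$ being normed. The compact set $E$ is compact in the norm topology, which is the topology of $S$, and $\eta$ satisfies the hypotheses of Theorem~\ref{thm:vector_uat} by assumption. The theorem then gives density of $\mathcal{A}_{S,T}^{\eta}$ in $C(E;L^q(\Omega_2))$.

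The main step is to identify the dual $S^*$. Here I will use the Riesz representation $(L^p)^* = L^{p'}$ for $1<p<\infty$, which holds for arbitrary measure spaces because $L^p$ is reflexive in this range; this is the standing duality convention adopted at the start of the section. Every $\ell \in S^*$ therefore has the form $\ell(f) = \int_{\Omega_1} f\varphi\, d\mu_1$ for some $\varphi \in L^{p'}(\Omega_1)$. Conversely, each such integral defines a continuous linear functional by H\"older's inequality, so the two descriptions coincide.

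Substituting this representation into the generators $\eta(\ell(f)-\theta)\,v$ with $v = g \in L^q(\Omega_2)$ shows that $\mathcal{A}_{S,T}^{\eta}$ is exactly the set of finite sums in the statement. Density of $\mathcal{A}_{S,T}^{\eta}$, supplied by Theorem~\ref{thm:vector_uat}, is therefore density of these integral-form networks. The only care needed is to make the exponent restriction $1<p<\infty$ explicit, since it is what guarantees the full Riesz representation of $S^*$.
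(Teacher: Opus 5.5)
Your proposal is correct and follows the same route the paper intends: specialize Theorem~\ref{thm:vector_uat} to $S=L^p(\Omega_1)$ and $T=L^q(\Omega_2)$, obtain the HBEP from Hahn--Banach in the normed space $L^p$, and use the standing duality $(L^p)^*=L^{p'}$ for $1<p<\infty$ to write every $\ell\in S^*$ as $f\mapsto\int_{\Omega_1} f\varphi\,d\mu_1$, with the scalar coefficients of the span absorbed into the $g_j$. Only the surjectivity of $L^{p'}\to(L^p)^*$ is needed for the density claim, and that is exactly the fact you invoke.
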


\begin{corollary}[Sequence-to-sequence approximation]
Let $1 < p < \infty$ and $1 \le q < \infty$, and consider $S = \ell^p$, $T = \ell^q$. 
If $E \subset \ell^p$ is compact and $\eta$ satisfies the assumptions of Theorem~\ref{thm:vector_uat}, then mappings of the form
\[
s \longmapsto 
\sum_{j=1}^{m}
\eta\!\left(
\sum_{n=1}^{\infty} a_{j,n} s_n - \theta_j
\right)
v_j,
\qquad
(a_{j,n})_{n \ge 1} \in \ell^{p'}, \;
v_j \in \ell^q,
\]
are dense in $C(E;\ell^q)$ with respect to uniform convergence on $E$.
\end{corollary}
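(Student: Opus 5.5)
The sequence-to-sequence corollary is a direct specialization of Theorem~\ref{thm:vector_uat} with $S=\ell^p$ and $T=\ell^q$. Three things need to be checked: the hypotheses of the theorem hold in this setting, the abstract approximation class $\mathcal{A}_{S,T}^{\eta}$ coincides with the explicit family of mappings in the statement, and the seminorm topology of $C(E;\ell^q)$ reduces to uniform norm convergence.

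\textbf{Hypotheses.} For $1<p<\infty$, $\ell^p$ is a Banach space and hence a normed space. The Hahn--Banach theorem for normed spaces gives the HBEP, in whatever form it is used in \cite{ismailov2026universal} (extension of continuous functionals from subspaces, with enough functionals to separate points). For $1\le q<\infty$, $\ell^q$ is a Banach space. It is therefore a Hausdorff LC-TVS whose topology is generated by the single seminorm $\|\cdot\|_q$. Every continuous seminorm $\rho$ on $\ell^q$ satisfies $\rho\le c\|\cdot\|_q$ for some $c>0$. Consequently, uniform convergence on $E$ with respect to all continuous seminorms is the same as uniform convergence in $\|\cdot\|_q$. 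The conditions on $\eta$ and the compactness of $E$ are assumed in the statement.

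\textbf{Identifying the class.} Use the Riesz-type duality $(\ell^p)^*\cong\ell^{p'}$ with $1/p+1/p'=1$, which the paper assumes as standard. Every $\ell\in(\ell^p)^*$ has the form $\ell(s)=\sum_{n\ge1}a_n s_n$ for a unique $(a_n)\in\ell^{p'}$, and by H\"older's inequality the series converges absolutely. Conversely, every such sequence defines an element of $(\ell^p)^*$. Hence each generator $\eta(\ell(s)-\theta)v$ of $\mathcal{A}_{\ell^p,\ell^q}^{\eta}$ is exactly $\eta\bigl(\sum_n a_n s_n-\theta\bigr)v$ with $v\in\ell^q$. Finite linear combinations $\sum_j c_j\eta(\cdots)v_j$ can be rewritten with the scalars $c_j$ absorbed into the vectors, replacing $v_j$ by $c_jv_j\in\ell^q$. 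This shows that $\mathcal{A}_{\ell^p,\ell^q}^{\eta}$ is precisely the set of mappings displayed in the corollary.

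\textbf{Conclusion and the delicate step.} Theorem~\ref{thm:vector_uat} now gives density in $C(E;\ell^q)$. The only point needing care is verifying the HBEP in the precise sense required by \cite{ismailov2026universal}. For normed spaces this is the classical Hahn--Banach theorem, so it should pose no real obstacle.
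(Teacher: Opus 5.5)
Your proposal is correct and takes the same approach as the paper. The paper gives no separate proof: it treats the corollary as an immediate specialization of Theorem~\ref{thm:vector_uat} with $S=\ell^p$, $T=\ell^q$, using its standing assumption $(\ell^p)^*=\ell^{p'}$ and the fact that a Banach target reduces the seminorm topology to the uniform norm topology. Your explicit checks (HBEP via Hahn--Banach, absorbing the scalar coefficients into the $v_j$, and $\rho\le c\|\cdot\|_q$ for continuous seminorms) fill in details the paper leaves implicit.
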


\begin{corollary}[Matrix inputs]
Let $S = \mathbb{R}^{n \times m}$ and let $T$ be a Hausdorff LC-TVS. 
If $E \subset \mathbb{R}^{n \times m}$ is compact and $\eta$ satisfies the 
conditions of Theorem~\ref{thm:vector_uat}, then mappings of the form
\[
Z \longmapsto 
\sum_{j=1}^{r}
\eta\big( \operatorname{tr}(W_j^{\top} Z) - \theta_j \big)\, v_j,
\qquad
W_j \in \mathbb{R}^{n \times m}, \;
v_j \in T,
\]
are dense in $C(E;T)$ under the seminorm topology.
\end{corollary}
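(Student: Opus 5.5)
The plan is to read the matrix-input case off Theorem~\ref{thm:vector_uat} by taking $S=\mathbb{R}^{n\times m}$ and identifying its continuous dual with matrices via the trace pairing. Three things need checking: $S$ has the HBEP, every $\ell\in S^*$ has the form $Z\mapsto \operatorname{tr}(W^\top Z)$, and the class in the corollary equals $\mathcal{A}_{S,T}^{\eta}$.

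\textbf{HBEP.} Endow $S=\mathbb{R}^{n\times m}$ with its unique Hausdorff vector topology, equivalently the Frobenius norm $\|Z\|_F=\operatorname{tr}(Z^\top Z)^{1/2}$. Then $S$ is a finite-dimensional normed space, hence locally convex. By the Hahn--Banach theorem, every continuous linear functional on a subspace extends to a continuous linear functional on $S$. In finite dimensions this is even simpler, since every linear functional is continuous and extensions can be built algebraically using a complementary basis.

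\textbf{The dual.} The pairing $\langle W,Z\rangle=\operatorname{tr}(W^\top Z)=\sum_{i,k}W_{ik}Z_{ik}$ is exactly the Euclidean inner product on $\mathbb{R}^{nm}$. By the Riesz representation theorem, or by writing $\ell(Z)=\sum_{i,k}\ell(E_{ik})Z_{ik}$ in the standard basis $E_{ik}$, every $\ell\in S^*$ is $Z\mapsto\operatorname{tr}(W^\top Z)$ for a unique $W$, namely $W_{ik}=\ell(E_{ik})$. Conversely, each such map is linear and continuous by Cauchy--Schwarz.

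\textbf{Equality of the classes.} The generators $Z\mapsto\eta(\operatorname{tr}(W^\top Z)-\theta)v$ are therefore precisely the generators $\eta(\ell(Z)-\theta)v$ of $\mathcal{A}_{S,T}^{\eta}$. Finite sums of the displayed form span the same set, since scalar multiples can be absorbed into $v_j\in T$. Hence the corollary's class equals $\mathcal{A}_{S,T}^{\eta}$.

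\textbf{Conclusion.} $E$ is compact in $S$, $T$ is a Hausdorff LC-TVS, and $\eta$ satisfies the hypotheses, so Theorem~\ref{thm:vector_uat} gives density in $C(E;T)$ in the seminorm topology.

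There is no real obstacle here. The only point needing care is that the topology on $\mathbb{R}^{n\times m}$ is unambiguous. If one prefers not to rely on the HBEP abstractly, the scalar step can instead cite the classical finite-dimensional theorem of Leshno et al.\ \cite{leshno1993multilayer} on $\mathbb{R}^{nm}$ in place of Lemma~\ref{lem:scalar_tvs_uat}, combined with Lemma~\ref{lem:finite_rank_density} exactly as in the main proof.
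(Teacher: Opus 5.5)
Your proposal is correct and follows the route the paper intends: the paper gives no separate proof, treating the corollary as a direct specialization of Theorem~\ref{thm:vector_uat} to the finite-dimensional space $S=\mathbb{R}^{n\times m}$, which has the HBEP automatically and whose dual is identified with matrices via $\ell(Z)=\operatorname{tr}(W^\top Z)$. Your explicit check of this dual identification, and of the equality of the displayed class with $\mathcal{A}_{S,T}^{\eta}$ (absorbing scalars into the $v_j$), fills in details the paper leaves implicit.
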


\begin{corollary}[Finite-rank operator representation] Under the assumptions of Theorem~\ref{thm:vector_uat}, every $F \in C(E;T)$ admits uniform approximation on $E$ by mappings of the form
\[
s \longmapsto 
\sum_{j=1}^{m} 
\eta\big(\phi_j(s) - \theta_j\big)\, v_j,
\qquad
\phi_j \in S^*, \; v_j \in T.
\]
Consequently, NN approximants in this setting may be interpreted 
as finite-rank operators.
\end{corollary}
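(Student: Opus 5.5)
This corollary is essentially a restatement of Theorem~\ref{thm:vector_uat}, so I would deduce it directly from the theorem and then justify the interpretive sentence separately.

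\textbf{Approximation.} Fix $F \in C(E;T)$, a continuous seminorm $\rho$ on $T$, and $\varepsilon>0$. Theorem~\ref{thm:vector_uat} gives $G \in \mathcal{A}_{S,T}^{\eta}$ with $\sup_{s\in E}\rho(F(s)-G(s))<\varepsilon$. By definition of the span, $G$ is a finite linear combination
\[
G(s)=\sum_{j=1}^{m} c_j\,\eta\big(\ell_j(s)-\theta_j\big)\,u_j ,
\]
with $c_j\in\mathbb{R}$, $\ell_j\in S^*$, $\theta_j\in\mathbb{R}$, $u_j\in T$. The scalars can be absorbed into the vectors by setting $v_j:=c_ju_j\in T$ and $\phi_j:=\ell_j$. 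Then $G$ has exactly the displayed form, which proves the approximation claim. The uniformity is the seminorm-uniform convergence already built into the theorem.

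\textbf{Finite-rank interpretation.} I would observe that each approximant takes values in $\operatorname{span}\{v_1,\dots,v_m\}$, a subspace of $T$ of dimension at most $m$. Hence its range is finite-dimensional, and this is the sense in which it is finite-rank. It is not a linear operator, because $\eta$ is nonlinear. Equivalently, $G$ factors as $s\mapsto(\phi_j(s))_{j}\in\mathbb{R}^m$, followed by a componentwise nonlinearity, followed by the linear map $\mathbb{R}^m\to T$, $(a_j)\mapsto\sum_j a_jv_j$. This linear map is continuous because every linear map on a finite-dimensional space into a TVS is continuous. I would also check continuity of $G$ as a map $E\to T$: each $\phi_j$ is continuous, $\eta$ is continuous, and scalar multiplication and addition in $T$ are continuous.

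\textbf{Main difficulty.} There is no genuine obstacle, since the corollary is a restatement of the theorem. The only point needing care is wording. "Finite-rank operator" must be read as "map with finite-dimensional range", not as a linear operator, and I would state this explicitly to avoid a false claim of linearity.
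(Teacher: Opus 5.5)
Your proposal is correct and matches the paper, which gives no separate proof and treats the corollary as an immediate restatement of Theorem~\ref{thm:vector_uat}: you absorb the scalar coefficients of the span into the vectors $v_j$, exactly as intended. Your reading of ``finite-rank'' as ``finite-dimensional range, not linear'' is a sensible clarification of the paper's informal final sentence.
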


Now, we conclude by indicating several important classes of LC-TVS to which Theorem~\ref{thm:vector_uat} applies. 
These examples illustrate the flexibility of the abstract framework and its relevance to operator learning in infinite-dimensional settings.

\begin{example}[Smooth function spaces]
Let $\Omega \subset \mathbb{R}^d$ be open and set 
$T = C^\infty(\Omega)$. 
Endow $C^\infty(\Omega)$ with its usual Fr\'echet topology generated by the seminorms
\[
\rho_{E,\alpha}(f)
:=
\sup_{s \in E} |\partial^\alpha f(s)|,
\]
where $E \subset \Omega$ is compact and $\alpha$ is a multi-index. 
With this topology, $C^\infty(\Omega)$ is a Hausdorff LC-TVS.

By Theorem~\ref{thm:vector_uat}, NN approximants with coefficients in $C^\infty(\Omega)$ are dense in $C(E_0; C^\infty(\Omega))$ for all compact $E_0 \subset S$, with convergence measured with respect to each seminorm $\rho_{E,\alpha}$. 
This setting naturally arises in the approximation of smooth solution operators for differential equations.
\end{example}

\begin{example}[Schwartz space]
Let $T = \mathcal{S}(\mathbb{R}^d)$ denote the Schwartz space of rapidly 
decreasing smooth functions. Its standard Fr\'echet topology is generated by the seminorms
\[
\rho_{\alpha,\beta}(f)
:=
\sup_{s \in \mathbb{R}^d}
|x^\alpha \partial^\beta f(s)|,
\]
where $\alpha$ and $\beta$ are multi-indices. 
The Schwartz space is a nuclear Fr\'echet space and hence a Hausdorff LC-TVS.

The density result of Theorem~\ref{thm:vector_uat} therefore extends to 
$C(E; \mathcal{S}(\mathbb{R}^d))$, with approximation understood simultaneously with respect to all defining seminorms. 
Such a framework is particularly relevant in signal analysis, time-frequency representations, and operator models involving rapidly decaying kernels.
\end{example}

\begin{example}[Distribution spaces]
Let $\Omega \subset \mathbb{R}^d$ be open and consider $T = \mathcal{D}'(\Omega)$, the space of distributions on $\Omega$, equipped with the strong dual topology corresponding to uniform convergence on bounded subsets of the test function space $\mathcal{D}(\Omega)$. 
With this topology, $\mathcal{D}'(\Omega)$ is a Hausdorff LC-space.

If $F: E \to \mathcal{D}'(\Omega)$ is continuous with respect to the strong topology and its range is contained in a bounded subset of $\mathcal{D}'(\Omega)$, then Theorem~\ref{thm:vector_uat} ensures uniform approximation on $E$ with respect to each defining seminorm. 
This setting is well-suited to the approximation of weak or distributional 
solutions of partial differential equations.
\end{example}

\paragraph{Relation to the Banach setting}
If the output space $T$ is Banach, the LC structure is generated by a single norm. In this case, the topology of uniform convergence determined by the seminorm family coincides with the usual uniform norm topology on $C(E;T)$. 
Thus, the Banach-valued approximation theory appears as a special instance 
of the more general LC framework developed in this paper.

\section{Applications to Operator Approximation}

In this section, we illustrate how the vector-valued UAT established above applies naturally to the approximation of nonlinear
operators between function spaces. Such mappings arise frequently in the
analysis of differential equations, inverse problems, and modern operator
learning frameworks.

\subsection{Approximation of nonlinear operators}

Let $S$ and $T$ be TVS and let $E \subset S$ be compact.
Consider a nonlinear operator
\[
\mathcal{F} : E \longrightarrow T .
\]

If $\mathcal{F}$ is continuous, Theorem~\ref{thm:vector_uat} guarantees that
$\mathcal{F}$ can be approximated uniformly on $E$ by mappings of the form
\[
\mathcal{G}(s)
=
\sum_{j=1}^{m}
\eta\big(\ell_j(s)-\theta_j\big)\, v_j,
\qquad
\ell_j \in S^*, \; v_j \in T .
\]

Thus, shallow NNs with scalar activation functions and
vector-valued coefficients provide universal approximators for continuous
operators between infinite-dimensional spaces.

\subsection{Approximation of integral operators}

A particularly important class of operators in analysis and scientific
computing is given by integral operators. Let $\Omega \subset \mathbb{R}^d$
be a bounded domain and consider the operator
\[
(\mathcal{F}f)(x)
=
\int_{\Omega} K(x,s)\, f(s)\, ds,
\]
where $K : \Omega \times \Omega \to \mathbb{R}$ is a continuous kernel.

Let $S = L^p(\Omega)$ with $1 < p < \infty$ and $T = L^q(\Omega)$ with
$1 \le q < \infty$. Suppose $E \subset L^p(\Omega)$ is compact and
$\mathcal{F} : E \to L^q(\Omega)$ is continuous.

By Theorem~\ref{thm:vector_uat}, for every $\varepsilon > 0$ and every
continuous seminorm $\rho$ on $L^q(\Omega)$, there exists a NNs
\[
\mathcal{G}(f)
=
\sum_{j=1}^{m}
\eta\!\left(
\int_{\Omega} f(s)\, \varphi_j(s)\, ds
-
\theta_j
\right)
g_j(x),
\qquad
\varphi_j \in L^{p'}(\Omega), \; g_j \in L^q(\Omega),
\]
such that
\[
\sup_{f \in E}
\rho\big(\mathcal{F}(f) - \mathcal{G}(f)\big)
<
\varepsilon .
\]

Hence, NNs of the above form are capable of uniformly
approximating integral operators on compact subsets of $L^p(\Omega)$.

\subsection{Connection with neural operator learning}

The representation
\[
\mathcal{G}(f)
=
\sum_{j=1}^{m}
\eta\big(\ell_j(f)-\theta_j\big)\, v_j
\]
is structurally similar to modern neural operator architectures used in
scientific machine learning. In these models, the function $\ell_j(f)$ acts
as a sensor or measurement functional applied to the input function $f$,
while the vectors $v_j$ represent basis functions in the output space.

Consequently, Theorem~\ref{thm:vector_uat} provides a rigorous
functional-analytic foundation for shallow neural operator models,
showing that such architectures are capable of approximating continuous
operators between infinite-dimensional spaces.

\subsection{Approximation of solution operators for partial differential equations}

Many problems in mathematical physics and engineering can be formulated as
operator equations in which the solution of a partial differential equation
(PDE) depends on an input function. In such situations, one is interested in
approximating the associated \emph{solution operator}.

Let $\Omega \subset \mathbb{R}^d$ be a bounded domain and consider a PDE of the form
\[
\mathcal{L}u = f \quad \text{in } \Omega,
\]
with suitable boundary conditions, where $\mathcal{L}$ is a differential
operator and $f$ represents the input forcing term.

Under standard assumptions, this problem defines a solution operator
\[
\mathcal{S} : S \longrightarrow T,
\qquad
f \longmapsto u,
\]
where $S$ is a function space containing admissible inputs (for example
$L^2(\Omega)$ or $H^{-1}(\Omega)$) and $T$ is a function space describing
the solution (for example $H^1_0(\Omega)$ or $C(\overline{\Omega})$).

If the solution operator $\mathcal{S}$ is continuous on a compact subset
$E \subset S$, Theorem~\ref{thm:vector_uat} implies that $\mathcal{S}$
can be approximated uniformly on $E$ by NNs of the form
\[
\mathcal{G}(f)
=
\sum_{j=1}^{m}
\eta\big(\ell_j(f)-\theta_j\big)\, v_j,
\qquad
\ell_j \in S^*, \; v_j \in T .
\]

In particular, when $S = L^p(\Omega)$ with $1 < p < \infty$, the functionals
$\ell_j$ admit the representation
\[
\ell_j(f) = \int_{\Omega} f(s)\, \varphi_j(s)\, ds,
\qquad
\varphi_j \in L^{p'}(\Omega).
\]

Thus the approximating operators take the form
\[
\mathcal{G}(f)
=
\sum_{j=1}^{m}
\eta\!\left(
\int_{\Omega} f(s)\,\varphi_j(s)\, ds
-
\theta_j
\right)
g_j(x),
\qquad
g_j \in T.
\]

Consequently, shallow neural operator architectures are capable of
approximating solution operators of partial differential equations on
compact subsets of the input space.

Such approximation results provide a theoretical justification for modern
operator-learning frameworks, in which neural networks are trained to map
input functions to the corresponding solutions of PDEs.

\section{Conclusion}

In this paper, we established a UAT for shallow NNs whose inputs belong to a TVS and whose outputs take values in a Hausdorff LC-TVS. The proposed framework shows that NNs with scalar activation functions and vector-valued coefficients generate finite neural expansions that are dense in $C(E;T)$ for compact subsets $E \subset S$, where convergence is understood with respect to the seminorm topology of the target space $T$.

This result extends the scalar-valued UAT for NNs defined on TVS and naturally includes Banach and Hilbert-valued approximation as special cases. As a consequence, classical uniform approximation results for vector-valued mappings arise as particular instances of a more general locally convex approximation framework. The theory, therefore, provides a functional-analytic foundation for shallow NN models acting between infinite-dimensional spaces.

We also illustrated the applicability of the theoretical results to several important settings. In particular, the framework allows the approximation of nonlinear operators between function spaces, including integral operators and solution operators of partial differential equations. These examples demonstrate that the proposed NN architecture can serve as a universal approximator for mappings that arise in operator learning and scientific computing.

Several directions for future research remain open. It would be of interest to derive quantitative approximation rates under additional regularity assumptions, to investigate extensions to deeper NN architectures within the same locally convex framework, and to explore variants involving stochastic inputs or operator-valued activation mechanisms.

\bigskip

\noindent\textbf{Declaration of competing interest}

The author declares that he has no competing financial interests or personal relationships that could influence the reported work in this paper.

\bibliographystyle{elsarticle-num}
\bibliography{Ref}
\end{document}